\documentclass{amsart}
\usepackage{amsmath,amsfonts,amsthm,amssymb}
\newtheorem{thm}{Theorem}

\newtheorem{fact}{Fact}
\title{Computing a Generating Set of Arithmetic Kleinian Groups}
\author{Gregory Muller}
\begin{document}

\maketitle

\begin{abstract}
The goal of this paper is to demonstrate the use of techniques from
hyperbolic geometry to compute generating sets of certain subgroups
of $SL^+(2,\mathbb{C})$; specifically, $SO^+(Q,\mathbb{Z})$ for $Q$
some integral quadratic form of signature $(3,1)$ that does not
represent $0$.
\end{abstract}

\section*{Introduction}

Hyperbolic space $\mathbb{H}^3$ can be thought of as the subset of
$\mathbb{R}^{4}$ that satisfies the equation $Q_1(\mathbf{x}) =
x_1^2+x_2^2+x_3^2-x_4^2=-1$ and the condition $x_4>0$, together with
the Riemannian metric induced by $Q_1$.  The group of isometries of
$\mathbb{H}^3$ is the space of $4$ by $4$ real matrices $A$ such
that $A^tQ_1A=Q_1$ and $e_4\cdot Ae_4>0$, where $e_4$ is the unit
vector in the $x_4$ direction.\footnote{The expression $A^tQ_1A=Q_1$
is using the fact that any quadratic form $Q$ can be written as a
symmetric matrix $\tilde{Q}$ such that
$Q(\mathbf{x})=\mathbf{x}^t\tilde{Q}\mathbf{x}$.  Throughout this
paper, the same notation will be used for a quadratic form and its
associated matrix.} It is denoted $SO^+(Q_1,\mathbb{R})$. This group
contains the discrete subgroup $SO^+(Q_1,\mathbb{Z})$ of integer
valued matrices.

Now consider a different quadratic form on $\mathbb{R}^4$; we will
be particularly interested in
\[Q_7(x_1,x_2,x_3,x_4)=x_1^2+x_2^2+x_3^2-7x_4^2.\]

Over $\mathbb{R}$, $Q_7$ and $Q_1$ are isomorphic; that is, there
exists an isometry $M:\mathbb{R}^4\rightarrow\mathbb{R}^4$ such that
$M^tQ_7M=Q_1$.  Therefore, $SO^+(Q_1,\mathbb{R})$ is isomorphic to
$SO^+(Q_7,\mathbb{R})$. However, there is no reason for there to be
an isomorphism between $SO^+(Q_7,\mathbb{Z})$ and
$SO^+(Q_1,\mathbb{Z})$.  What is true is that $SO^+(Q_7,\mathbb{Z})$
does act on $\mathbb{H}$ as a subgroup of
$SO^+(Q_7,\mathbb{R})=SO^+(Q_1,\mathbb{R})$.

All of the above is true for any integral quadratic form of
signature $(3,1)$.  Why, then, focus on the form $Q_7$?  The
following theorem sheds some light on this.

%
%
%

\begin{thm}\cite{Borel}.
$Q(\mathbf{x})=0$ has a non-zero integral solution if and only if
$\mathbb{H}^3/SO^+(Q,\mathbb{Z})$ is not compact.
\end{thm}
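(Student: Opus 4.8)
The plan is to establish the two implications separately, after first passing from $\mathbb{H}^3$ to the group $G=SO^+(Q,\mathbb{R})$. Fixing a basepoint $\mathbf{x}_0\in\mathbb{H}^3$, its stabilizer $K\subset G$ is a copy of $SO(3)$ and is compact, so $\mathbb{H}^3=G/K$ and $\mathbb{H}^3/SO^+(Q,\mathbb{Z})=\Gamma\backslash G/K$, where $\Gamma=SO^+(Q,\mathbb{Z})$. Because $K$ is compact, the quotient $\mathbb{H}^3/\Gamma$ is compact if and only if $\Gamma\backslash G$ is; the whole question therefore concerns how $\Gamma$-orbits can escape to infinity in $G$.

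I would first treat ``$Q$ represents $0$ $\Rightarrow$ the quotient is not compact.'' Let $\mathbf{v}\in\mathbb{Z}^4\setminus\{0\}$ satisfy $Q(\mathbf{v})=0$; then $\mathbf{v}$ is lightlike and determines an ideal point $\xi\in\partial\mathbb{H}^3$. As $\mathbf{v}$ is rational, the stabilizer in $G$ of the isotropic line $\mathbb{R}\mathbf{v}$ is a $\mathbb{Q}$-parabolic subgroup whose unipotent radical $U$ is defined over $\mathbb{Q}$ and is a vector group; hence $\Gamma\cap U$ is a nontrivial lattice in $U(\mathbb{R})$ and furnishes a nontrivial element $\gamma\in\Gamma$ that fixes $\xi$ and translates the horospheres based at $\xi$, i.e.\ a parabolic. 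By discreteness (the Margulis lemma) a sufficiently deep horoball $B$ centered at $\xi$ is precisely invariant under the stabilizer $\Gamma_\xi$, so $B/\Gamma_\xi$ embeds in $\mathbb{H}^3/\Gamma$ as a cusp neighborhood. This neighborhood is noncompact, since $B$ runs off to infinity toward $\xi$ while $\Gamma_\xi$ acts only along horospheres; hence $\mathbb{H}^3/\Gamma$ is noncompact.

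For the converse, ``not compact $\Rightarrow$ $Q$ represents $0$,'' the engine is a Mahler-type compactness criterion. I would invoke that $\Gamma$ is a lattice in $G$ (Borel--Harish-Chandra), so that noncompactness of $\Gamma\backslash G$ yields a sequence $g_n\in G$ whose images leave every compact subset. The key step is to extract from such an escaping sequence a single nonzero lattice vector $\mathbf{v}\in\mathbb{Z}^4$ with $g_n\mathbf{v}\to 0$: as the orbit dives into a cusp, some vector of the integral lattice is compressed toward the cusp's isotropic direction. Granting this, the conclusion is immediate and uses only $G$-invariance of $Q$: since $Q(g_n\mathbf{v})=Q(\mathbf{v})$ is a fixed integer while $Q(g_n\mathbf{v})\to Q(\mathbf{0})=0$, we get $Q(\mathbf{v})=0$, so $Q$ represents $0$ nontrivially over $\mathbb{Z}$.

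The genuine obstacle is exactly the extraction of that shrinking integral vector, namely the Mahler/Godement criterion itself; here discreteness of $\mathbb{Z}^4$, geometry of numbers, and reduction theory for $\Gamma$ all do real work, whereas the $Q$-invariance punchline and the cusp construction are routine by comparison. A purely hyperbolic alternative for the converse would argue inside $\mathbb{H}^3$: by the thick--thin decomposition the $\varepsilon$-thick part of the finite-volume manifold $\mathbb{H}^3/\Gamma$ is compact, so escape to infinity forces points $p_n$ with $d(p_n,\gamma_n p_n)\to 0$ for some $\gamma_n\neq 1$; since finite-volume Margulis tubes are compact, the escaping short loops must lie in a cusp, forcing some $\gamma_n$ to be parabolic, and the rational eigendirection of this integral matrix is then an integral isotropic vector. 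Both routes localize the same difficulty: controlling the ways in which $\Gamma$-orbits can diverge.
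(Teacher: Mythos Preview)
The paper does not actually prove this theorem: it is stated with a bare citation to Borel's \emph{Introduction aux Groupes Arithm\'etiques} and then immediately applied. So there is no in-paper argument to compare your proposal against.

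On its own terms your sketch is the standard route to the Godement compactness criterion for orthogonal groups, and the overall architecture is sound. Two small remarks. First, in the converse direction you ask for a \emph{single} integral vector $\mathbf{v}$ with $g_n\mathbf{v}\to 0$; Mahler's criterion only hands you a sequence $\mathbf{v}_n\in\mathbb{Z}^4\setminus\{0\}$ with $g_n\mathbf{v}_n\to 0$. This is harmless here, since $Q(\mathbf{v}_n)=Q(g_n\mathbf{v}_n)\to 0$ is an integer and hence eventually zero, so any late $\mathbf{v}_n$ works; but the phrasing should be adjusted. Second, Mahler's criterion lives in $SL_4$, not in $G=SO^+(Q,\mathbb{R})$, so the passage from ``$g_n$ leaves compacta in $\Gamma\backslash G$'' to ``$g_n\mathbb{Z}^4$ leaves compacta in the space of unimodular lattices'' is not automatic; one needs either reduction theory for $G$ (Siegel sets for the rational parabolics) or the observation that $SO(Q,\mathbb{Z})\backslash SO(Q,\mathbb{R})$ sits properly inside $SL_4(\mathbb{Z})\backslash SL_4(\mathbb{R})$. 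You flag this as the genuine obstacle, which is exactly right; just be aware that it is a nontrivial input rather than a consequence of Mahler alone.
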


$Q_7$ does not have such a solution.  To see this, assume it does,
and furthermore, assume that the coordinates of $\mathbf{x}$ have no
common factor (no loss of generality because a common factor can
just be pulled out). Then consider the above equation mod 8.
\[0\equiv x_1^2+x_2^2+x_3^2-7x_4^2\equiv x_1^2+x_2^2+x_3^2+x_4^2
\mod 8\] However, mod 8, there are only three possible squares: 0, 1
and 4. In the above equation, none of the summands can be 1, because
then the rest would have to also be 1 to satisfy the equation mod 4,
which is not a solution mod 8.  Therefore, all the summands are
either 0 or 4, meaning $x_1$, $x_2$, $x_3$ and $x_4$ are all even,
contradicting the assumption of having no common factor.

Thus, the quotient orbifold $\mathbb{H}^3/SO^+(Q_7,\mathbb{Z})$ is
compact, and the goal now is to use this to understand the algebraic
structure of $SO^+(Q_7,\mathbb{Z})$.

It is important to note that the techniques used here to compute a
generating set for this group can be used to find similar generating
sets for $SO^+(Q_n,\mathbb{Z})$, with $Q_n=x_1^2+x_2^2+x_3^2-nx_4^2$
and $Q_n(\hat{x})=0$ having no non-zero solutions\footnote{The proof
that $Q_7$ was such a form also proves that $Q_n$ will also work,
for any $n=8m-1$, for $m>0$.}.  The basic algorithm will, in fact,
work for any integral quadratic form of signature $(3,1)$ which
yields a compact quotient orbifold, although the details will
require some work.

\section*{Dirichlet Fundamental Domains}

To further understand $\mathbb{H}^3/SO^+(Q_7,\mathbb{Z})$, the next
step will be to create a fundamental domain for it.  The fundamental
domain of choice will be a Dirichlet fundamental domain.


Constructing a Dirichlet fundamental domain $D$ for the action of a
group $G$ begins with choosing a point $x$ in hyperbolic space.
Then, determine two open sets, $U$ and $V$.

Let $U$ be a fundamental domain for the stabilizer of $x$.  In the
disc model, if the point $x$ is the central point, then the
stabilizer consists entirely of rotations, and so the fundamental
domain $U$ can be taken to be a cone.

Let $V$ be the set of points in hyperbolic space that are closer to
$x$ than any other point in $Gx$, the orbit of $x$.  This is often
called the 'Voronoi cell' of $x$. In practice, Voronoi cells can be
constructed in the following way. Let $A\in G$, and let $H_A$ denote
the half-space consisting of points that are closer to $x$ than to
$Ax$.  Then, the Voronoi cell is the intersection of $H_A$ over all
$A\in G$.

Then, the Dirichlet fundamental domain $D$ is $U\cap V$.

%

When the quotient orbifold is compact, a Dirichlet fundamental
domain can be computed algorithmically.

\textbf{Step 1)} Choose a fundamental domain $U$ for the stabilizer
of $x$.

\textbf{Step 2)} Produce $G_L$, the set of elements of $G$ that move
the point $x$ less than some distance $L$.

\textbf{Step 3)} Let $V_L$ be the intersection of $H_A$ over all
$A\in G_L$. If $V_L$ is compact, then there is some distance $M$
such that no point in $V_L$ is more than $M$ away from $x$.

\textbf{Step 4)} If $M<L/2$, then $V_L$ is the same as $V$, the
Voronoi cell of all of $G$, because $V_L\subset H_A$ for all $A$
that move $x$ more that $2M$. If $M\geq L/2$ or $V_L$ is not
compact, increase $L$ and do it again.

This will work eventually, because it must work when $L$ is greater
than the diameter of the orbifold.

How does this look for $\mathbb{H}^3/SO^+(Q_7,\mathbb{Z})$?  The
point $x$ will be $(0,0,0,7^{-1/2})$, and its stabilizer consists of
$SO^+(3,\mathbb{Z})$ acting on the first 3 coordinates.  It is a
group of order 24, and consists of compositions of permutations and
reflections along axes which are orientation preserving.
Fortunately, their action on the disc model with $x$ in the center
is exactly the same as their action on the first 3 coordinates of
the hyperboloid model, and so creating a fundamental domain is easy.
We will take the fundamental domain $U$ to be the set of points in
the disc which have positive Euclidean inner-product with the
following three vectors:
\[v_1=(1,-1,0,0);\;\;\;v_1=(0,1,-1,0);\;\;\;v_2=(0,0,1,0)\]


\subsection*{Generating Elements of $SO^+(Q_7,\mathbb{Z})$.}

%

The hard part about producing the rest of the fundamental domain is
producing all elements of $SO^+(Q_7,\mathbb{Z})$ which move $x$ less
than some fixed distance. To do this, several facts about elements
of $SO^+(Q_7,\mathbb{Z})$ will be helpful. Let $A$ be an element of
$SO^+(Q_7,\mathbb{Z})$, and let $a_{ij}$ denote the $ij$th
coefficient of the matrix $A$.

\begin{fact} The first three columns of the matrix $A$ have pseudolength
\footnote{The term 'pseudolength' of a vector $\mathbf{x}$ in
$\mathbb{R}^4$ just means $Q_7(\mathbf{x})$.  The term emphasizes
the idea of $Q_7$ giving $\mathbb{R}^4$ a pseudometric.} 1, the last
column has pseudolength -7, and they are all pairwise orthogonal.
Furthermore, any integral matrix that satisfies these properties is
automatically in $SO(Q_7,\mathbb{Z})$.
\end{fact}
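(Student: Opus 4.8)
The plan is to unpack the definition of $SO(Q_7,\mathbb{Z})$ directly from the matrix equation $A^tQ_7A=Q_7$ and show it is equivalent to the stated column conditions. Since $Q_7$ as a symmetric matrix is the diagonal matrix $\mathrm{diag}(1,1,1,-7)$, the equation $A^tQ_7A=Q_7$ says that the $(i,j)$ entry of the product on the left equals the $(i,j)$ entry of $Q_7$ on the right. I would write out what the $(i,j)$ entry of $A^tQ_7A$ is in terms of the columns of $A$: if $c_1,c_2,c_3,c_4$ denote the columns, then the $(i,j)$ entry is exactly the bilinear form $B_7(c_i,c_j)$ associated to $Q_7$, where $B_7(\mathbf{u},\mathbf{v})=\mathbf{u}^tQ_7\mathbf{v}$. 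So the single matrix equation $A^tQ_7A=Q_7$ is literally the system of equations $B_7(c_i,c_j)=(Q_7)_{ij}$ for all $i,j$.

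Reading off the right-hand side, the diagonal conditions give $B_7(c_1,c_1)=B_7(c_2,c_2)=B_7(c_3,c_3)=1$ and $B_7(c_4,c_4)=-7$, which are precisely the statements that the first three columns have pseudolength $1$ and the last has pseudolength $-7$; the off-diagonal conditions $B_7(c_i,c_j)=0$ for $i\neq j$ are exactly pairwise orthogonality with respect to $Q_7$. This gives both directions at once: a matrix $A$ satisfies $A^tQ_7A=Q_7$ if and only if its columns satisfy the listed pseudolength and orthogonality relations. The only subtlety is bookkeeping symmetry — since both $A^tQ_7A$ and $Q_7$ are symmetric, the equations for $(i,j)$ and $(j,i)$ coincide, so there is nothing extra to check and the six independent off-diagonal plus four diagonal conditions are the whole content.

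The remaining point is the word "automatically in $SO(Q_7,\mathbb{Z})$," which requires showing that an \emph{integral} matrix satisfying $A^tQ_7A=Q_7$ necessarily has determinant $+1$ rather than $-1$, so that it lies in $SO$ and not merely $O$. Taking determinants of $A^tQ_7A=Q_7$ gives $(\det A)^2\det Q_7=\det Q_7$, and since $\det Q_7=-7\neq 0$ we get $(\det A)^2=1$, so $\det A=\pm 1$. I expect this determinant sign to be the main obstacle, since the column relations alone only force $A\in O(Q_7,\mathbb{Z})$; pinning down $\det A=+1$ presumably uses the "$+$" convention of orientation-preservation built into the definition of $SO^+$ earlier, namely the condition $e_4\cdot Ae_4>0$ together with the structure of the orthogonal group of a form of signature $(3,1)$. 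I would argue that for signature $(3,1)$ the orientation-preserving (time-preserving) component is exactly the one with $\det A=+1$, so the sign condition and the pseudolength/orthogonality conditions together characterize membership in $SO^+(Q_7,\mathbb{Z})$ among integral matrices.
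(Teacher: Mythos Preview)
Your core argument---that the matrix equation $A^tQ_7A=Q_7$ unpacks entrywise into exactly the pseudolength and orthogonality conditions on the columns---is precisely the paper's proof, which is the single sentence ``This is just a different way of saying $A^tQ_7A=Q_7$.'' You have simply written out the details.

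Where you go beyond the paper is in worrying about the $S$ in $SO$: the column conditions give only $A\in O(Q_7,\mathbb{Z})$, and you correctly observe that $(\det A)^2=1$ is all one gets from taking determinants. The paper does not address this; it is genuinely loose here. However, your proposed fix is wrong. You suggest that in signature $(3,1)$ the time-preserving condition $e_4\cdot Ae_4>0$ forces $\det A=+1$. It does not: the real group $O(3,1)$ has \emph{four} connected components, indexed independently by the sign of $\det A$ and by time-orientation. A concrete counterexample is $A=\mathrm{diag}(-1,1,1,1)$, which is integral, satisfies all the column pseudolength and orthogonality conditions, has $a_{44}=1>0$, yet has $\det A=-1$. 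So neither the column conditions alone, nor those conditions together with the ``$+$'' condition, pin down $\det A=+1$; the Fact as stated should really read $O(Q_7,\mathbb{Z})$ rather than $SO(Q_7,\mathbb{Z})$, and your attempt to rescue the stronger claim does not succeed.
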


\begin{fact} Let $x$ be an element of $\mathbb{H}^3$.
The distance in $\mathbb{H}^3$ from $x$ to $Ax$ is $\cosh(a_{44})$,
where $a_{44}$ is the bottom right entry in the matrix.
\end{fact}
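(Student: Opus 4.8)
The plan is to work directly in the hyperboloid model cut out by $Q_7$, namely $\{\mathbf{y}\in\mathbb{R}^4 : Q_7(\mathbf{y})=-1,\ y_4>0\}$, which is isometric to $\mathbb{H}^3$ via the map $M$ of the introduction and on which $SO^+(Q_7,\mathbb{R})$ acts by isometries. The chosen base point $x=(0,0,0,7^{-1/2})$ lies on this hyperboloid, since $Q_7(x)=-7\cdot 7^{-1}=-1$. Writing $B(\mathbf{u},\mathbf{v})=\mathbf{u}^t Q_7\mathbf{v}$ for the symmetric bilinear form polarizing $Q_7$ (using the paper's convention that identifies a form with its matrix), I would first note that the content of the statement is that the \emph{hyperbolic cosine} of the distance equals $a_{44}$; that is, $d(x,Ax)=\operatorname{arccosh}(a_{44})$, or equivalently $\cosh d(x,Ax)=a_{44}$.

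The central ingredient is the standard distance formula for the hyperboloid model: for points $p,q$ on $\{Q_7=-1,\ y_4>0\}$ one has $\cosh d(p,q)=-B(p,q)$. I would derive this rather than cite it, to keep the argument self-contained. Because $B(x,x)=-1$, the geodesics through $x$ can be parametrized by arc length as $\gamma(t)=\cosh(t)\,x+\sinh(t)\,\mathbf{u}$, where $\mathbf{u}$ is a $B$-unit spacelike vector that is $B$-orthogonal to $x$. One checks $Q_7(\gamma(t))=-1$ for all $t$ and that $t$ is arc length for the induced metric, and then evaluates
\[
B(x,\gamma(t))=\cosh(t)\,B(x,x)+\sinh(t)\,B(x,\mathbf{u})=-\cosh(t),
\]
so that $\cosh d(x,\gamma(t))=-B(x,\gamma(t))$. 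Since every point of the upper sheet lies on such a geodesic through $x$, the formula holds in general, and isometry-invariance of $B$ extends it to an arbitrary pair $p,q$.

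With the formula in hand the computation is short. Since $x=7^{-1/2}e_4$, the image $Ax$ is $7^{-1/2}$ times the last column of $A$, i.e.\ $Ax=7^{-1/2}(a_{14},a_{24},a_{34},a_{44})^t$. Applying the diagonal matrix $Q_7=\operatorname{diag}(1,1,1,-7)$ and pairing against $x$, every term involving $a_{14},a_{24},a_{34}$ is annihilated because the first three entries of $x$ vanish, leaving
\[
B(x,Ax)=7^{-1/2}\cdot 7^{-1/2}\cdot(-7\,a_{44})=-a_{44}.
\]
Hence $\cosh d(x,Ax)=-B(x,Ax)=a_{44}$, which is the desired conclusion.

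The only real obstacle is justifying the distance formula, as the rest is a one-line matrix pairing; the key checks there are that the proposed $\gamma(t)$ stays on the hyperboloid and is unit-speed (so that $B$-arc-length agrees with hyperbolic distance), which follow from $B(x,x)=-1$, $B(x,\mathbf{u})=0$, and $B(\mathbf{u},\mathbf{u})=1$. A subtlety worth flagging is that $x$ does \emph{not} lie on the $Q_1$-hyperboloid, so one must measure everything with $B$ (equivalently, transport through $M$) rather than conflate the two models; once that is respected, the normalization $B(x,x)=-1$ is precisely what produces the clean relation $\cosh d(x,Ax)=a_{44}$.
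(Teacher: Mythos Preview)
Your proof is correct. The paper itself does not actually prove this fact: in the proof block following the list of five facts, items 1, 3, 4, and 5 are treated but item 2 is skipped entirely. Your argument therefore supplies what the paper leaves unstated.

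You also correctly identify two imprecisions in the statement as printed. First, the intended assertion is $\cosh d(x,Ax)=a_{44}$ (equivalently $d(x,Ax)=\operatorname{arccosh}(a_{44})$), not $d(x,Ax)=\cosh(a_{44})$; this is consistent with the paper's later use of the fact, where it speaks of elements moving $x$ ``less than some fixed distance $\cosh(L)$'' and then bounds $a_{44}$ by $L$. Second, the formula is specific to the distinguished base point $x=(0,0,0,7^{-1/2})$ used throughout the paper, not to an arbitrary point of $\mathbb{H}^3$ as the opening clause suggests; for a general $x$ the right-hand side would be $-B(x,Ax)$ rather than $a_{44}$.

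Your derivation of the hyperboloid distance formula $\cosh d(p,q)=-B(p,q)$ via unit-speed geodesics is the standard one, and the final pairing computation $B(x,Ax)=-a_{44}$ is correct.
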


\begin{fact} The entries $a_{14}$, $a_{24}$ and $a_{34}$, the right
column except for the bottom right entry, are all divisible by $7$.
\end{fact}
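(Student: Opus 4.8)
The plan is to work modulo $7$, but to resist the temptation to argue directly from the length of the last column. From Fact 1 the last column $(a_{14},a_{24},a_{34},a_{44})^t$ has pseudolength $-7$, giving $a_{14}^2+a_{24}^2+a_{34}^2 = 7(a_{44}^2-1) \equiv 0 \pmod 7$. One would like to conclude that each $a_{i4}$ is itself divisible by $7$, but this does \emph{not} follow: the nonzero squares mod $7$ are $\{1,2,4\}$, and $1+2+4\equiv 0\pmod 7$, so a sum of three squares can vanish mod $7$ with no term vanishing. This is the main obstacle, and it tells me the bare length of the fourth column is not enough information; I must bring in the orthogonality relations.

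The idea is to introduce the top-left $3\times 3$ block $P=(a_{ij})_{1\le i,j\le 3}$ of $A$ and observe that, mod $7$, the pseudo-orthonormality of the first three columns becomes honest orthonormality. Writing the bilinear form associated to $Q_7$ as $B(x,y)=x_1y_1+x_2y_2+x_3y_3-7x_4y_4$, Fact 1 gives $B(c_i,c_j)=\sum_{k=1}^{3}a_{ki}a_{kj}-7a_{4i}a_{4j}=\delta_{ij}$ for $i,j\in\{1,2,3\}$. The term $7a_{4i}a_{4j}$ dies mod $7$, so $P^tP\equiv I\pmod 7$. Hence $\det(P)^2\equiv 1\pmod 7$, and $P$ — therefore also $P^t$ — is invertible over $\mathbb{Z}/7\mathbb{Z}$.

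Next I would exploit the orthogonality of the last column against each of the first three. Setting $w=(a_{14},a_{24},a_{34})^t$, the relation $B(c_i,c_4)=0$ reads $\sum_{k=1}^{3}a_{ki}a_{k4}=7a_{4i}a_{44}$ for $i=1,2,3$, and the right-hand side is a multiple of $7$. Since $\sum_{k=1}^{3}a_{ki}a_{k4}$ is exactly the $i$th entry of $P^tw$, this says $P^tw\equiv 0\pmod 7$. Because $P^t$ is invertible mod $7$, I may cancel it to obtain $w\equiv 0\pmod 7$, that is, $a_{14}\equiv a_{24}\equiv a_{34}\equiv 0\pmod 7$, which is the claim.

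In short, the crux is recognizing that the failed sum-of-squares count is repaired by the full orthogonality structure of Fact 1: reduced mod $7$, the $3\times 3$ spatial block is an orthogonal matrix and hence invertible, and invertibility is precisely what forces the top three entries of the last column into $7\mathbb{Z}$.
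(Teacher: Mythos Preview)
Your argument is correct. You correctly identify that the pseudolength condition on the fourth column alone is insufficient (your example $1+2+4\equiv 0\pmod 7$ is exactly the obstruction), and your repair---reducing the full relation $A^tQ_7A=Q_7$ mod $7$ to get $P^tP\equiv I$, hence $P^t$ invertible over $\mathbb{Z}/7\mathbb{Z}$, then cancelling $P^t$ in $P^tw\equiv 0$---is clean and complete.

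The paper takes a different and somewhat shorter route: from $A^tQ_7A=Q_7$ it solves for $A^{-1}=Q_7^{-1}A^tQ_7$, writes this matrix out explicitly, and observes that its bottom row is $(-a_{14}/7,\,-a_{24}/7,\,-a_{34}/7,\,a_{44})$. Since $\det A=\pm 1$, the inverse is again an integer matrix, and the divisibility of $a_{14},a_{24},a_{34}$ by $7$ drops out immediately. Your approach and the paper's are two faces of the same coin---both ultimately exploit that $A^tQ_7A=Q_7$ forces relations beyond the single column length---but the paper packages it as ``$A^{-1}$ is integral,'' which avoids the mod-$7$ linear algebra entirely and is a one-line computation once the inverse formula is written down. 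Your version, on the other hand, makes explicit \emph{why} the naive squares argument fails and exactly which extra structure (orthogonality of columns, not just their lengths) saves it, which is arguably more illuminating even if longer.
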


\begin{fact} The entry $a_{44}$ is either 1 more or 1 less than a
multiple of 7.
\end{fact}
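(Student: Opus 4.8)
The plan is to prove that the bottom-right entry $a_{44}$ of any $A \in SO^+(Q_7,\mathbb{Z})$ satisfies $a_{44} \equiv \pm 1 \pmod 7$. Let me work out the key congruence relations.

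First I would write out the pseudolength condition on the last column. By Fact 1 the fourth column $\mathbf{c}_4=(a_{14},a_{24},a_{34},a_{44})^t$ has pseudolength $-7$, so
\[
a_{14}^2+a_{24}^2+a_{34}^2-7a_{44}^2=-7.
\]
This single equation is the entire engine of the argument.

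Next I would bring in Fact 3, which says that $a_{14}$, $a_{24}$ and $a_{34}$ are each divisible by $7$. Writing $a_{i4}=7b_i$ for integers $b_1,b_2,b_3$ and substituting gives
\[
49(b_1^2+b_2^2+b_3^2)-7a_{44}^2=-7,
\]
and dividing through by $7$ leaves $a_{44}^2=1+7(b_1^2+b_2^2+b_3^2)$. In particular $a_{44}^2\equiv 1\pmod 7$.

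The last step is to read off which residues $a_{44}$ can occupy. The nonzero squares modulo $7$ are $1$, $4$ and $2$ (arising from $\pm 1$, $\pm 2$ and $\pm 3$ respectively), so $a_{44}^2\equiv 1$ forces $a_{44}\equiv\pm 1\pmod 7$; that is, $a_{44}$ is exactly one more or one less than a multiple of $7$, which is the claim.

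I do not expect any serious obstacle here: once Facts 1 and 3 are in hand the conclusion is a one-line congruence. The only point worth flagging is that the divisibility genuinely has to come from Fact 3 and not from the pseudolength relation by itself. Reducing that relation modulo $7$ in isolation only yields $a_{14}^2+a_{24}^2+a_{34}^2\equiv 0$ and says nothing directly about $a_{44}$; it is the extra factors of $7$ supplied by Fact 3 that, after cancelling a single $7$, isolate $a_{44}^2\equiv 1$ and thereby pin down $a_{44}$ modulo $7$.
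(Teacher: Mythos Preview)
Your proof is correct and follows essentially the same route as the paper: use Fact~1 to get $a_{14}^2+a_{24}^2+a_{34}^2-7a_{44}^2=-7$, invoke Fact~3 to write $a_{i4}=7b_i$, and divide by $7$ to obtain $a_{44}^2\equiv 1\pmod 7$. The only cosmetic difference is in the last step, where the paper factors $a_{44}^2-1=(a_{44}-1)(a_{44}+1)$ and appeals to the primality of $7$, whereas you list the quadratic residues; both are equivalent one-line finishes.
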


\begin{fact} The absolute values of $a_{41}$, $a_{42}$ and $a_{43}$, the
bottom entries save the bottom right entry, are all bounded above by
$a_{44}/\sqrt{7}$.
\end{fact}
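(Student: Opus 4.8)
The key observation is that the defining relation $A^tQ_7A=Q_7$, which constrains the columns of $A$ as recorded in Fact 1, can be transposed into a relation constraining the \emph{rows} of $A$. The plan is to pass to the dual form $Q_7^{-1}=\mathrm{diag}(1,1,1,-1/7)$. Since $\det A=1$ and $A$ is integral, from $A^tQ_7A=Q_7$ one solves $A^{-1}=Q_7^{-1}A^tQ_7$; substituting this into $AA^{-1}=I$ and cancelling the factor of $Q_7$ yields
\[AQ_7^{-1}A^t=Q_7^{-1}.\]
In other words, the rows of $A$ are ``pseudo-orthonormal'' with respect to $Q_7^{-1}$, exactly dual to the column statement of Fact 1.

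The bound then falls out of a single entry of this matrix identity. Reading off the $(4,4)$ entry of $AQ_7^{-1}A^t=Q_7^{-1}$ gives
\[a_{41}^2+a_{42}^2+a_{43}^2-\tfrac{1}{7}a_{44}^2=-\tfrac{1}{7},\]
which I would rearrange to
\[a_{41}^2+a_{42}^2+a_{43}^2=\frac{a_{44}^2-1}{7}.\]
Because each of the three terms on the left is a nonnegative real number, each individually is at most the right-hand side, so $a_{4j}^2\le (a_{44}^2-1)/7\le a_{44}^2/7$ for $j=1,2,3$. Taking square roots and recalling that $a_{44}>0$ for elements of the identity component $SO^+$ (this is exactly the condition $e_4\cdot Ae_4>0$) yields $|a_{4j}|\le a_{44}/\sqrt{7}$, as claimed; in fact the inequality is strict.

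The only step requiring any thought is the first one: recognizing that the bottom \emph{row} of $A$ is governed by the inverse/dual relation rather than by the original relation $A^tQ_7A=Q_7$, which speaks only about columns. Everything afterward---extracting one diagonal entry and invoking the nonnegativity of squares---is immediate, so I do not anticipate a genuine obstacle. One could alternatively avoid inverting altogether by noting that $A^{-1}=Q_7^{-1}A^tQ_7$ is again an integral element of $SO^+(Q_7,\mathbb{Z})$ and applying the column facts to it, but the direct route through the dual form is the cleanest.
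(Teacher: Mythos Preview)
Your proof is correct and matches the paper's argument: both establish the identity $7(a_{41}^2+a_{42}^2+a_{43}^2)=a_{44}^2-1$ and then bound each square individually by the right-hand side. The paper phrases the first step as ``since $A$ is the inverse of some other matrix in $SO^+(Q_7,\mathbb{Z})$'' and reuses the explicit inverse formula from Fact~3---which is exactly the alternative you mention in your final sentence---so your direct route through the dual relation $AQ_7^{-1}A^t=Q_7^{-1}$ is the same computation unpacked.
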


\begin{proof}
1) This is just a different way of saying $A^tQ_7A=Q_7$.

3) $A$ satisfies the equation $A^tQ_7A=Q_7$. $(Q_7)^{-1}A^tQ_7A=1$,
and so $A^{-1}=(Q_7)^{-1}A^tQ_7$.  Given the simple nature of $Q_7$
written as a symmetric matrix, this has a simple form:
\[A^{-1}=\left(
           \begin{array}{cccc}
             a_{11} & a_{21} & a_{31} & -7a_{41} \\
             a_{12} & a_{22} & a_{32} & -7a_{42} \\
             a_{13} & a_{23} & a_{33} & -7a_{43} \\
             -a_{14}/7 & -a_{24}/7 & -a_{34}/7 & a_{44} \\
           \end{array}
\right)\] Since this matrix also consists of integers, the specified
entries are divisible by 7.

4) The right column has pseudolength $-7$, which means
$a_{14}^2+a_{24}^2+a_{34}^2-7a_{44}^2=-7$. Using their divisiblity
by 7, let $a_{14}=7\alpha$, $a_{24}=7\beta$ and $a_{34}=7\gamma$.
Then $49(\alpha^2+\beta^2+\gamma^2)-7a_{44}^2=-7$, and so
$7(\alpha^2+\beta^2+\gamma^2)=a_{44}^2-1=(a_{44}-1)(a_{44}+1)$.  $7$
must divide $a_{44}-1$ or $a_{44}+1$, proving the statement.

5) Since $A$ is the inverse of some other matrix in
$SO^+(q,\mathbb{Z})$, its entries satisfy the equation
$7(a_{41}^2+a_{42}^2+a_{43}^2)=a_{44}^2-1$.  Therefore,
$\max(|a_{41}|,|a_{42}|,|a_{43}|)\leq a_{44}/\sqrt{7}$.
\end{proof}

\subsection*{The Algorithm for Generating Such Elements.}

With these facts established, elements of $SO^+(Q_7,\mathbb{Z})$
which move $x$ less than some fixed distance $\cosh(L)$ can be
generated.

\textbf{Step 1)} Generate a list of pseudolength 1 vectors whose
fourth component is bounded above by $L/\sqrt{7}$.  This list is
finite since the sum of the squares of the first three coordinates
is bounded above by $L^2+1$.

\textbf{Step 2)} For every $0\leq n:=(7i\pm1)\leq L$ (for $i$ an
integer) find all the solutions of $(7a)^2+(7b)^2+(7c)^2-7n^2=-7$,
and correspondingly, all the possible right columns of elements of
$SO^+(Q_7,\mathbb{Z})$.  Again, there are only a finite number of
possibilities.

\textbf{Step 3)} For each possible right column found in Step 2,
compare it to every vector on the list from Step 1, to find the
vectors which are orthogonal to the chosen right column.  Of the
vectors which are orthogonal, compare them to each other and find
which pairs are orthogonal to each other.  Then, any triple of
vectors which are orthogonal to the chosen right column and each
other constitutes an element of $SO^+(Q_7,\mathbb{Z})$.

Once the elements have been generated, the half planes they generate
must be drawn, and compactness and maximum distance must be checked.


\section*{The Generators}

So what does this algorithm yield?  First, in the interest of coming
up with a generating set for $SO^+(Q_7,\mathbb{Z})$, we need a
generating set of the stabilizer of the distinguished point.  As was
mentioned before, this group is $SO^+(3,\mathbb{Z})$, which is known
to be isomorphic to $S_4$.

To see this, notice that this group is the symmetry group of the
cube, and that a symmetry of the cube is determined by its
permutation of the diagonals of the cube.  There are four diagonals
in a cube, and every permutation of the diagonals is possible.
Generators for this group as a subgroup of $SO^+(Q_7,\mathbb{Z})$
are:

\[(12)=\left(
  \begin{array}{cccc}
    0 & 1 & 0 & 0 \\
    1 & 0 & 0 & 0 \\
    0 & 0 & -1 & 0 \\
    0 & 0 & 0 & 1 \\
  \end{array}
\right),\;\;\;(1234)=\left(
  \begin{array}{cccc}
    0 & 0 & 1 & 0 \\
    0 & 1 & 0 & 0 \\
    -1 & 0 & 0 & 0 \\
    0 & 0 & 0 & 1 \\
  \end{array}
\right)\]

The above algorithm yields the following as a sufficient set of
generators.  There is some ambiguity in which matrices to choose, up
to multiplication by an element of the stabilizer.  These
particularly matrices have been chosen so that they move the
Dirichlet fundamental domain to an adjacent region.

\[A=\left(
  \begin{array}{cccc}
    -2 & -2 & 0 & 7 \\
    -5 & -2 & 0 & 14 \\
    0 & 0 & -1 & 0 \\
    -2 & -1 & 0 & 6 \\
  \end{array}
\right) A^{-1}=\left(
  \begin{array}{cccc}
    -2 & -5 & 0 & 14 \\
    -2 & -2 & 0 & 7 \\
    0 & 0 & -1 & 0 \\
    -1 & -2 & 0 & 6 \\
  \end{array}
\right)\]

\[B=\left(
  \begin{array}{cccc}
    0 & -1 & 0 & 0 \\
    -8 & 0 & 0 & 21 \\
    0 & 0 & -1 & 0 \\
    -3 & 0 & 0 & 8 \\
  \end{array}
\right) C=\left(
  \begin{array}{cccc}
    -4 & -3 & -2 & 14 \\
    -3 & -4 & -2 & 14 \\
    -2 & -2 & 0 & 7 \\
    -2 & -2 & -1 & 8 \\
  \end{array}
\right) B^{-1}=\left(
  \begin{array}{cccc}
    0 & -8 & 0 & 21 \\
    -1 & 0 & 0 & 0 \\
    0 & 0 & -1 & 0 \\
    0 & -3 & 0 & 8 \\
  \end{array}
\right)\]

\[D=\left(
  \begin{array}{cccc}
    -4 & -3 & -2 & 14 \\
    -9 & -4 & -4 & 28 \\
    -4 & -2 & -3 & 14 \\
    -4 & -2 & -2 & 13 \\
  \end{array}
\right) D^{-1}=\left(
  \begin{array}{cccc}
    -4 & -9 & -4 & 28 \\
    -3 & -4 & -2 & 14 \\
    -2 & -4 & -3 & 14 \\
    -2 & -4 & -2 & 13 \\
  \end{array}
\right)\]

\[E=\left(
  \begin{array}{cccc}
    -7 & -8 & 0 & 28 \\
    -8 & -7 & 0 & 28 \\
    0 & 0 & -1 & 0 \\
    -4 & -4 & 0 & 15 \\
  \end{array}
\right)\]

\[F=\left(
  \begin{array}{cccc}
    -10 & -12 & -10 & 49 \\
    -2 & -4 & -3 & 14 \\
    -3 & -4 & -2 & 14 \\
    -4 & -5 & -4 & 20 \\
  \end{array}
\right) F^{-1}=\left(
  \begin{array}{cccc}
    -10 & -2 & -3 & 28 \\
    -12 & -4 & -4 & 35 \\
    -10 & -3 & -2 & 28 \\
    -7 & -2 & -2 & 20 \\
  \end{array}
\right)\]

Notice that a left-right reflection of this list sends matrices to
their inverses.

Which of these matrices are necessary to generate
$SO^+(Q_7,\mathbb{Z})$?  The following relations demonstrate that
only $A$, $C$ and the stabilizer group are necessary.
\[B=(1234)^2(12)A^{-2}(12)\]
\[D=(1234)^3(12)C(1234)^3\]
\[E=A^{-1}(1234)^2(12)A^{-1}(12)\]
\[F=C(12)(1234)^2A^{-1}(12)\]

Therefore, the four matrices $(12)$, $(1234)$, $A$ and $C$ are
enough to generate the entire group $SO^+(Q_7,\mathbb{Z})$.

\end{document}